\crefname{hypothesis}{Hypothesis}{Hypotheses}
\title{Scaled Relative Graphs of Normal Matrices 
}
 \author{Xinmeng Huang\thanks{Applied Mathematics and Computational Science, University of Pennsylvania (\email{xinmengh@sas.upenn.edu})}\and Ernest K. Ryu\thanks{Department of Mathematics, University of California, Los Angeles (\email{eryu@math.ucla.edu})} \and Wotao Yin\thanks{Department of Mathematics, University of California, Los Angeles (\email{wotaoyin@math.ucla.edu})}}
\tikzstyle std=[line width=0.7pt]   
\tikzstyle stdthin=[line width=0.3pt]   
\tikzstyle stdthick=[line width=1.0pt]   
\tikzstyle fwd=[line width=0.7pt, ->]   
\tikzstyle fwdthin=[line width=0.3pt, ->]   
\tikzstyle fwdthick=[line width=1.0pt, ->]   
\tikzstyle fwddash=[line width=0.7pt, dashed, ->]   
\tikzstyle bwd=[double, line width=0.3pt, ->]  
\tikzstyle refl=[double,  dashed, line width=0.2pt, ->]    
\tikzstyle{every node}=[font=\small] 
\tikzset{
  >=stealth', 
  invisible/.style={opacity=0}, 
  alt/.code args={<#1>#2#3}{\alt<#1>{\pgfkeysalso{#2}}{\pgfkeysalso{#3}}}, 
  visible on/.style={alt=#1{}{invisible}}, 
  smallnode/.style={circle, fill=black, thick, inner sep=1pt, minimum size=1.5pt}, 
  punkt/.style={
           rectangle,
           rounded corners,
           draw=black, very thick,
           text width=5.5em,
           minimum height=2em,
           text centered},
  punkt_big/.style={
           rectangle,
           rounded corners,
           draw=black, very thick,
           text width=7em,
           minimum height=2em,
           text centered},
}
\newtheorem{fact}{Fact}
\newcommand{\reals}{\mathbb{R}}
\newcommand{\complex}{\mathbb{C}}
\newcommand{\D}{\mathrm{Poly}}
\newcommand{\spanspan}{\mathrm{span}}
\newcommand{\C}{\mathrm{Circ}}
\newcommand{\disk}{\mathrm{Disk}}
\newcommand{\rac}{\mathrm{Arc}_\mathrm{min}}
\newcommand{\cG}{{\mathcal{G}}}
\newcommand{\Diag}{{\mathrm{Diag}}} 
\renewcommand{\Re}{\operatorname{Re}} 	
\renewcommand{\Im}{\operatorname{Im}}	
\DeclareFontFamily{U}{tipa}{}
\DeclareFontShape{U}{tipa}{m}{n}{<->tipa10}{}
\newcommand{\arc@char}{{\usefont{U}{tipa}{m}{n}\symbol{62}}}%
\newcommand{\arc}[1]{\mathpalette\arc@arc{#1}}
\newcommand{\arc@arc}[2]{%
  \sbox0{$\m@th#1#2$}%
  \vbox{
    \hbox{\resizebox{\wd0}{\height}{\arc@char}}
    \nointerlineskip
    \box0
  }%
}
\definecolor{lightgrey}{gray}{0.8}
\definecolor{medgrey}{gray}{0.6}
\definecolor{darkgrey}{gray}{0.4}
\begin{document}

\maketitle

\begin{abstract}
The Scaled Relative Graph (SRG) is a geometric tool that maps the action of a multi-valued nonlinear operator onto the 2D plane, used to analyze the convergence of a wide range of iterative methods. As the SRG includes the spectrum for linear operators, we can view the SRG as a generalization of the spectrum to multi-valued nonlinear operators. In this work, we further study the SRG of linear operators and characterize the SRG of block-diagonal and normal matrices.
\end{abstract}

\begin{keywords}
scaled relative graph,
non-Euclidean geometry,
hyperbolic geometry,
normal matrix
\end{keywords}

\begin{AMS}
47H05, 47H09, 51M04, 52A55, 90C25
\end{AMS}

\section{Introduction}
The Scaled Relative Graph (SRG) is a geometric tool that maps the action of a multi-valued nonlinear operator onto the extended complex plane, analogous to how the spectrum maps the action of a linear operator to the complex plane. 
The SRG can be used to analyze convergence of a wide range of iterative methods expressed as fixed-point iterations.

\vspace{0.05in}
\noindent
\textbf{Scaled relative graph.}
For a matrix $A\in\reals^{n\times n}$, define $z_A\colon\reals^n\backslash\{0\}\rightarrow \complex$ with
\[
z_A(x)=\frac{\| Ax\|}{\| x\|}\exp[i\angle (Ax,x)],
\]
where \begin{align*}
    \angle(a,b) =
    \left\{
    \begin{array}{ll}
    \arccos\left(\tfrac{a^Tb}{\|a\|\|b\|}\right)&\text{ if }a\ne 0,\,b\ne 0\\
    0&\text{ otherwise}
    \end{array}
    \right.
\end{align*}
denotes the angle in $ [0,\pi]$ between $a$ and $b$.
The SRG of a matrix $A\in \reals^{n\times n}$ is
\[
\cG(A)=\left\{z_A(x),\overline{z_A}(x):x\in \reals^n,\,x\ne 0\right\}.
\]
This definition of the SRG, specific to (single-valued) linear operators, coincides with the more general definition for nonlinear multi-valued operators provided in \cite{ryu2019scaled}.
Ryu, Hannah, and Yin showed the SRG generalizes spectrum  in the following sense.
\begin{fact}[Theorem~3.1 of \cite{ryu2019scaled}]
\label{fact1}
If $A\in \reals^{n\times n}$ and $n = 1$ or $n\ge 3$, then $\Lambda(A)\subseteq \cG(A)$.
\end{fact}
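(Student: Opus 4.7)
The plan is to separate the three cases by difficulty. The case $n=1$ is immediate, since $A$ is a real scalar and $z_A(x)=A$ for every $x\neq 0$. For any $n$, a real eigenvalue $\lambda$ admits a real eigenvector $v$ for which $z_A(v)=\lambda$ directly from the definitions. The substantive case is a non-real eigenvalue $\lambda=a+ib$ with $b>0$ (the case $b<0$ reduces to this one by the conjugation symmetry built into $\cG$).

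For the main case, pick an eigenvector $v=u+iw$ with $u,w\in\mathbb{R}^n$, so that $Au=au-bw$ and $Aw=bu+aw$. Multiplying $v$ by a unit complex number rotates the pair $(u,w)$ within the real invariant plane $V=\spanspan\{u,w\}$, and I would use this freedom to enforce $u\perp w$. Writing $p=\|u\|$, $q=\|w\|$, and if necessary replacing $v$ by $iv$ to ensure $p\ge q$, a short computation using $u\perp w$ gives $z_A(u)=a+i\,bq/p$ and $z_A(w)=a+i\,bp/q$. When $p=q$ this yields $z_A(u)=\lambda$ directly, so the only remaining situation is $p>q$.

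In that situation I would invoke $n\ge 3$ to pick a unit vector $e\perp V$ and form the continuous homotopy
\[
H(\phi,t)=z_A\bigl(\cos t\cdot x_V(\phi)+\sin t\cdot e\bigr),\qquad x_V(\phi)=\tfrac{\cos\phi}{p}\,u+\tfrac{\sin\phi}{q}\,w,
\]
for $(\phi,t)\in\mathbb{R}/\pi\mathbb{Z}\times[0,\pi/2]$. This realises a continuous map of a disc into $\mathbb{C}$ whose outer boundary ($t=0$) traces the SRG curve $\gamma_V$ of the restriction $A|_V$ and whose inner endpoint ($t=\pi/2$) is the constant $z_A(e)$. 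If $z_A(e)=\lambda$ we are done; otherwise, the standard degree-theoretic fact that a disc whose boundary winds once around a point must cover that point forces $H$ to take the value $\lambda$, provided $\gamma_V$ winds once around $\lambda$.

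Proving this winding-number claim is the main obstacle. I would express $A|_V$ in the orthonormal basis $(u/p,w/q)$, giving the matrix $M=\bigl(\begin{smallmatrix}a & bp/q\\ -bq/p & a\end{smallmatrix}\bigr)$, parameterise $v_\phi=(\cos\phi,\sin\phi)$, and compute $\mathrm{Re}\,\gamma_V(\phi)=a+\tfrac{b}{2}(p/q-q/p)\sin(2\phi)$ along with the analogous expression for $|\gamma_V(\phi)|^2$. The first formula shows that $\gamma_V$ meets the vertical line $\mathrm{Re}(z)=a$ only at $\phi=0,\pi/2$, with ordinates $bq/p<b<bp/q$ straddling $\lambda$. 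Combined with checks that $\mathrm{Im}\,\gamma_V>0$ throughout and that $\mathrm{Im}\,\gamma_V$ is strictly monotone on each of $(0,\pi/2)$ and $(\pi/2,\pi)$, this certifies that $\gamma_V$ is a Jordan curve enclosing $\lambda$. The delicate part of the whole proof is the trigonometric estimates for the monotonicity and positivity of the imaginary part; once secured, the topological conclusion is routine.
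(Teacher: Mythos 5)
The paper does not reprove this statement; it quotes it as Theorem~3.1 of \cite{ryu2019scaled}, so there is no in-paper proof to compare against. Judged on its own merits, your argument is correct and gives a complete proof, though two details deserve tightening. First, the ``delicate'' trigonometric step is in fact clean: in the orthonormal basis $(u/p,w/q)$ one finds
\[
\bigl(\operatorname{Im}\gamma_V(\phi)\bigr)^2
= b^2\Bigl(\tfrac{q}{p}\cos^2\phi+\tfrac{p}{q}\sin^2\phi\Bigr)^2,
\]
so $\operatorname{Im}\gamma_V(\phi)=b\bigl(\tfrac{q}{p}\cos^2\phi+\tfrac{p}{q}\sin^2\phi\bigr)$, which is strictly positive and strictly monotone in $\sin^2\phi$. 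Together with $\operatorname{Re}\gamma_V(\phi)=a+\tfrac{b}{2}(p/q-q/p)\sin(2\phi)$ this shows $\gamma_V$ is a circle centered at $\bigl(a,\tfrac{b}{2}(p/q+q/p)\bigr)$ of radius $\tfrac{b}{2}(p/q-q/p)$, which strictly encloses $\lambda=a+ib$ precisely when $p\ne q$; this is exactly the two-by-two computation the paper records later as Proposition~\ref{mat-2*2}, applied with $a_1=a_2=a$, $b_1=bp/q$, $b_2=-bq/p$. Second, the homotopy domain should be $\mathbb{R}/2\pi\mathbb{Z}\times[0,\pi/2]$, not $\mathbb{R}/\pi\mathbb{Z}\times[0,\pi/2]$: for $t>0$ one has $H(\phi+\pi,t)=z_A(-\cos t\,x_V(\phi)+\sin t\,e)\ne H(\phi,t)$ in general, so $H$ does not descend to the smaller quotient. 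Using the $2\pi$-periodic domain costs nothing: the boundary then traces $\gamma_V$ twice, so the winding number around $\lambda$ is $\pm2\ne 0$ and the degree argument still forces $\lambda$ into the image. With those repairs, the structure (real eigenvalues directly; complex eigenvalues via an orthogonalized eigenplane, the circular curve $\gamma_V$, and a contraction using a third dimension supplied by $n\ge 3$) is sound, and it is in fact the same style of topological argument the present paper deploys in Step~4 of the proof of Theorem~\ref{mat-main-thm}, where a curve of nonzero winding number is contracted through $\spanspan(\mathbf u_1,\dots,\mathbf u_m)\cap S^{m-1}$ to derive a contradiction.
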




2D geometric illustrations have been used 
by Eckstein and Bertsekas \cite{eckstein1989,eckstein1992}, 
Giselsson \cite{giselsson2017linear,giselsson_slides},
Banjac and Goulart \cite{banjac2018},
and Giselsson and Moursi \cite{giselsson_moursi2019}
to qualitatively understand the convergence of optimization algorithms.
The SRG was first presented as a rigorous formulation of such illustrations in Hannah and Yin's technical report \cite{hannahsrg} and was further expanded upon in the follow-up work by Ryu, Hannah, and Yin \cite{ryu2019scaled}.

\vspace{0.05in}
\noindent
\textbf{Contributions.}
Prior work \cite{ryu2019scaled,huang2019scaled} focused on the SRG of \emph{nonlinear} multi-valued operators.
For linear operators, Ryu, Hannah, and Yin \cite{ryu2019scaled} established $\cG(A)$ includes $\Lambda(A)$, as stated in Fact~\ref{fact1}, but did not characterize when and how $\cG(A)$ enlarges $\Lambda(A)$.
In this work, we further study the SRG of linear operators.
In particular, we fully characterize the SRG of block-diagonal and normal matrices as a certain polygon in hyperbolic (non-Euclidean) geometry, under the Poincar\'e half-plane model.

\vspace{0.05in}
\noindent
\textbf{Preliminaries.}
Let $A\in\reals^{n\times n}$.
Write $\Lambda(A)$ for the spectrum, the set of eigenvalues, of $A$.
$A$ is \emph{normal} if $A^TA=AA^T$.
Given matrices $A_1,\dots,A_m$, write $\Diag(A_1,\dots,A_m)$ for the block-diagonal matrix with $m$ blocks.
For $z\in \complex$, write $\overline{z}$ for its complex conjugate.
For a set $S\subseteq\mathbb{C}$, write $S^+=\{z\in S\,|\Im z\geq 0\}$.
In particular, write $\mathbb{C}^+=\{z\in \mathbb{C}\,|\Im z\geq 0\}$ and $\cG^+(A)=\left\{z_A(x):x\in \reals^n,\,x\ne 0\right\}$.
Note $z_A(x)\in \complex^+$ for all nonzero $x\in \reals^n$.
For $z_1,z_2\in \complex$, define 
\[
[z_1,z_2]=\{\theta z_1+(1-\theta)z_2:\theta\in[0,1]\},
\]
i.e., $[z_1,z_2]$ is the line segment connecting $z_1$ and $z_2$.

\section{Arc-edge polygon and arc-convexity}
Consider points $z_1,z_2\in \mathbb{C}^+$.
If $\Re z_1\neq \Re z_2$, let $ \C(z_1,z_2) $ be the circle in $\complex $ through $z_1$ and $z_2$ with the center on the real axis.
We can construct $ \C(z_1,z_2) $ by finding the center as the intersection of the perpendicular bisector of $[z_1,z_2]$ and the real axis. 
If $\Re z_1= \Re z_2$ but $z_1\ne z_2$, let $\C(z_1,z_2)$ be the line extending $[z_1,z_2]$. If $z_1=z_2$, then $\C(z_1,z_2)$ is undefined.
If $\Re z_1\neq \Re z_2$, let $\rac(z_1,z_2)\subseteq \mathbb{C}^+$ be the arc of $ \C(z_1,z_2)$ between $z_1$ and $z_2$ in the upper-half plane.
(If $\Im z_1>0$ or $\Im z_2>0$, then $\rac(z_1,z_2)\subseteq \mathbb{C}^+$ is the minor arc of $ \C(z_1,z_2)$ between $z_1$ and $z_2$.
If $\Im z_1=\Im z_2=0$, then $\rac(z_1,z_2)$ is a semicircle in $\mathbb{C}^+$.)
If $\Re z_1= \Re z_2$ but $z_1\ne z_2$, let $\rac(z_1,z_2)=[z_1,z_2]$.
If $z_1=z_2$, then $\rac(z_1,z_2)=\{z_1\}$.
For $z_1,z_2\in \mathbb{C}^+$ such that $\Re z_1\neq \Re z_2$, let $\disk(z_1,z_2)$ and $\disk^\circ (z_1,z_2)$ respectively be the closed and open disks enclosed by $ \C(z_1,z_2) $.
Figure~\ref{fig:circ_arc_def} illustrates these definitions.


\begin{figure}[h]
	\centering
	\begin{tikzpicture}[scale=0.75]
	\def\a{15};
	\def\b{75};
	\def\r{2.5}
	\def\t{2};
	
	\coordinate (A) at ({\t+\r*cos(\a)},{\r*sin(\a)});
	\coordinate (B) at ({\t+\r*cos(\b)},{\r*sin(\b)});
	
	\begin{scope}
	\draw [<->] (-1,0) -- (5,0);
	\draw [<->] (0,-2.5) -- (0,2.5);
	\end{scope}
	
	\draw [dashed] (2,0) circle (2.5);
	\draw (-1.2,2.3) node[below] {$\C(z_1,z_2)$};
	\def\tc{150};
	\draw [->] (-0.6,1.7) -- ({2+2.5*cos(\tc)},{2.5*sin(\tc)});
	
	\draw [line width = 1/0.75] (A) arc ({\a}:{\b}:{\r});
	\draw (A) node[right] {$z_2$};
	\draw (B) node[above] {$z_1$};
	
	\draw (4.7,2.3) node[above] {$\rac(z_1,z_2)$};
	\def\tr{60};
	\draw [->] (4.1,2.5) -- ({2+2.5*cos(\tr)},{2.5*sin(\tr)});
	
	\draw [] (A) -- (B);
	\draw [] (-0.5,-2.5) -- (4.2,2.2);
	
	\draw [] ({\t+(0.77*\r*cos(\a)+0.23*\r*cos(\b))+0.1},{(0.77*\r*sin(\a)+0.23*\r*sin(\b))+0.1}) -- ({\t+(0.77*\r*cos(\a)+0.23*\r*cos(\b))-0.1},{(0.77*\r*sin(\a)+0.23*\r*sin(\b))-0.1});
	\draw [] ({\t+(0.73*\r*cos(\a)+0.27*\r*cos(\b))+0.1},{(0.73*\r*sin(\a)+0.27*\r*sin(\b))+0.1}) -- ({\t+(0.73*\r*cos(\a)+0.27*\r*cos(\b))-0.1},{(0.73*\r*sin(\a)+0.27*\r*sin(\b))-0.1});
	
	\draw [] ({\t+(0.77*\r*cos(\b)+0.23*\r*cos(\a))+0.1},{(0.77*\r*sin(\b)+0.23*\r*sin(\a))+0.1}) -- ({\t+(0.77*\r*cos(\b)+0.23*\r*cos(\a))-0.1},{(0.77*\r*sin(\b)+0.23*\r*sin(\a))-0.1});
	\draw [] ({\t+(0.73*\r*cos(\b)+0.27*\r*cos(\a))+0.1},{(0.73*\r*sin(\b)+0.27*\r*sin(\a))+0.1}) -- ({\t+(0.73*\r*cos(\b)+0.27*\r*cos(\a))-0.1},{(0.73*\r*sin(\b)+0.27*\r*sin(\a))-0.1});
	
	\coordinate (C) at ({\t+0.5*(\r*cos(\a)+\r*cos(\b))},{0.5*(\r*sin(\a)+\r*sin(\b))});   
	\coordinate (O) at (2.0,0); 
	\tkzMarkRightAngle[size=0.2](O,C,A);
	
	\filldraw (\t,0) circle ({1.0/0.75pt});
	\filldraw (A) circle ({1.0/0.75pt});
	\filldraw (B) circle ({1.0/0.75pt});
	
	\end{tikzpicture}
	\begin{tikzpicture}[scale=0.75]
	\def\a{15};
	\def\b{75};
	\def\r{2.5}
	\def\t{1.5};
	
	\coordinate (A) at (\t,2);
	\coordinate (B) at (\t,0.5);
	
	\begin{scope}
	\draw [<->] (-1,0) -- (5,0);
	\draw [<->] (0,-2.5) -- (0,2.5);
	\end{scope}
	
	\draw [line width =1/0.7] (A) -- (B);
	\draw [dashed] (\t,-2.5) -- (\t,2.5);
	\draw (\t+0.3,-1) node[right] {$\C(z_1,z_2)$};
	\draw [->] (\t+0.4,-1) -- (\t,-1);
	
	\draw (A) node[right] {$z_1$};
	\draw (B) node[right] {$z_2$};
	
	\draw (\t+0.3,1) node[above right] {$\rac(z_1,z_2)$};
	\draw [->] (\t+0.4,1.35) -- (\t,1.35);
	
	\filldraw (\t,0) circle ({1.0/0.75pt});
	\filldraw (A) circle ({1.0/0.75pt});
	\filldraw (B) circle ({1.0/0.75pt});
	
	\end{tikzpicture}
	\caption{Illustration of $\C(z_1,z_2)$ and $\rac(z_1,z_2)$.
	}
	\label{fig:circ_arc_def}
\end{figure}

For $m\ge 1$ and $z_1,\dots,z_m\in\mathbb{C}^+$, we call $\D(z_1,z_2,\dots,z_m)$ an \emph{arc-edge polygon} and define it as follows.
For $m=1$, let $\D(z_1)=\{z_1\}$.
For $m\ge 2$, let
\begin{equation*}
S=\bigcup_{1\le i,j\le m}\rac(z_i,z_j)
\label{s:def}
\end{equation*}
and 
\begin{equation*}
\D(z_1,\dots,z_m)=S\cup \{\text{region enclosed by $S$}\}.
\label{poly:def}
\end{equation*}
Figure~\ref{fig:a-e-poly-example} illustrates this definition.
Note $\D(z_1,z_2)=\rac(z_1,z_2)$.
The ``region enclosed by $S$'' is the union of all regions enclosed by non-self-intersecting continuous loops (Jordan curves) within $S$.
Since $S$ is a connected set, we can alternatively define $\D(z_1,\dots,z_m)$ as the smallest simply connected set containing $S$.

\afterpage{%
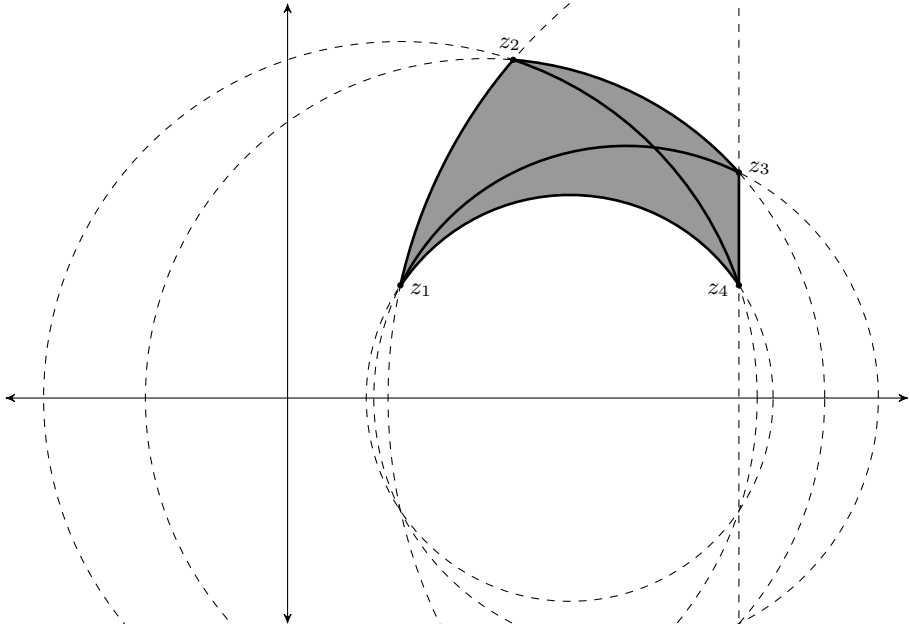
\begin{figure}[H]
	\begin{center}
		\begin{tabular}{c}
			\raisebox{-.5\height}{
				\begin{tikzpicture}[scale=1.5]
 				\clip (-2.55,-2) rectangle (5.55,3.5);
				
				\fill [fill=medgrey](4,1) -- (4,2) arc ({atan(2/(4-7/4))}:{atan(3/(2-7/4))}:{sqrt((4-7/4)^2+2^2)})--(2,3) arc (139.3987:167.4712:4.61);
				\fill[fill=white](2.5,0) circle (1.803);

				\draw [line width =1] (4,1) arc (18.4349:71.5651:3.1623);
				\draw [line width =1] (4,2) arc ({atan(2/(4-7/4))}:{atan(3/(2-7/4))}:{sqrt((4-7/4)^2+2^2)});
				\draw [line width =1] (4,2)--(4,1);
				\draw [line width =1] (4,2) arc ({atan(2/(4-3))}:{180-atan(1/(3-1))}:{sqrt(1+(3-1)^2)});
				\draw [line width =1] (2,3) arc (139.3987:167.4712:4.61);
				\draw [line width=1] (4,1) arc (33.7:146.3:1.8);
				
				\draw[dashed] (4,-3.5) -- (4,3.5);
				\draw[dashed] (3,0) circle ({sqrt(2^2+1)});
				\draw[dashed] (1,0) circle ({sqrt(3^2+1^2)});
				\draw[dashed] (2.5,{3.5}) arc ({180-atan(3.5/(5.5-2.5))}:{180+atan(3.5/(5.5-2.5))}:{sqrt(4.5^2+1)});
				\draw[dashed] (2.5,0) circle ({sqrt(1+1.5^2)});
				\draw[dashed] (7/4,0) circle ({sqrt(2^2+(4-7/4)^2)});
				
				\draw (1,1.1) node[below right] {$z_1$};
				\draw (2.15,3) node[above left] {$z_2$};
				\draw (4,1.9) node[above right] {$z_3$};
				\draw (4,1.1) node[below left] {$z_4$};
				
				\filldraw (1,1) circle ({1.0/1.5pt});
				\filldraw (2,3) circle ({1.0/1.5pt});
				\filldraw (4,2) circle ({1.0/1.5pt});
				\filldraw (4,1) circle ({1.0/1.5pt});
				
				\draw [<->] (-2.5,0) -- (5.5,0);
				\draw [<->] (0,-2) -- (0,3.5);
				\end{tikzpicture}}
		\end{tabular}
	\end{center}
	\caption{ The shaded region illustrates the arc-edge polygon $\D(z_1,z_2,z_3,z_4)$ for $z_1=1+i$, $z_2=2+3i$, $z_3=4+2i$, and $z_4=4+i$. The solid arcs illustrate $\rac(z_i,z_j)$ and the dashed circles illustrate $\C(z_i,z_j)$ for $i,j=1,\dots,m$.}
	\label{fig:a-e-poly-example}
\end{figure}
 \begin{figure}[h]
 	\begin{center}
		\raisebox{-.5\height}{
 		\begin{tikzpicture}[scale=1.9]

 		\fill [fill=medgrey] (2,1) -- (2,2) arc ({acos(sqrt(5)/5)}:{acos((-2)*sqrt(5)/5)}:{sqrt(5)}) -- (-1,1) arc ({90:0:1}) -- (0,0) arc (180:{atan(0.5/0.375)}:0.625) -- (1,0.5) arc ({180-atan(0.5/(15/8-1))}:{atan(1/(2-15/8))}:{sqrt((7/8)^2+0.5^2)});

 		\draw[] (2,2) arc ({acos(sqrt(5)/5)}:{acos((-2)*sqrt(5)/5)}:{sqrt(5)});
 		\draw[] (1,0.5) arc ({atan(0.5/0.375)}:180:0.625);
 		\draw[] (0,0) arc (0:90:1); 
 		\draw[] (2,1) -- (2,2);
 		\draw[] (2,1) arc ({atan(1/(2-15/8))}:{180-atan(0.5/(15/8-1))}:{sqrt((7/8)^2+0.5^2)});

 		\draw[] (2,2) arc ({180-atan(2/(27/8-2))}:{180-atan(0.5/(27/8-1))}:{sqrt((19/8)^2+(1/2)^2)}); 
 		\draw[] (2,1) arc ({atan(1/1.5)}:{180-atan(1/1.5)}:{sqrt(1.5^2+1)});
 		\draw[] (2,1) arc ({atan(1/0.75)}:180:1.25);
 		\draw[] (2,2) arc (90:180:2);
 		\draw[] (1,0.5) arc ({atan(0.5/(1+3/16))}:{180-atan(1/(-3/16+1))}:{sqrt((1+3/16)^2+0.5^2)});

 		\draw[] (0.5,{sqrt(19/4)}) arc ({180-atan(sqrt(19/4)/4.5)}:180:5);
 		\draw[] (1,0.5) arc ({atan(0.5/(1+15/4))}:{atan(sqrt(19/4)/(0.5+15/4))}:{sqrt((17/4)^2+19/4)});
 		\draw[] (2,1) arc ({atan(1/2)}:{atan(sqrt(19/4)/0.5)}:{sqrt(5)});

 		\draw[] (0,0) arc (0:{atan(1/0.75)}:{1.25});
 		\draw[] (-0.5,1) arc ({atan(1/0.25)}:{180-atan(1/0.25)}:{sqrt(1+0.25^2)});
 		\draw[] (1,0.5) arc ({atan(0.5)}:{180-atan(2)}:{sqrt(5/4)});
 		\draw[] (2,1) arc ({atan(1/1.25)}:{180-atan(1/1.25)}:{sqrt(1+1.25^2)});
 		\draw[] (2,2) arc ({atan(2/(2-27/20))}:{180-atan(1/(27/20+0.5))}:{sqrt(4+(13/20)^2)});

 		\filldraw (-1,1) circle (1.0/1.9pt);
 		\filldraw (0,0) circle (1.0/1.9pt);
 		\filldraw (1,0.5) circle (1.0/1.9pt);
 		\filldraw (2,2) circle (1.0/1.9pt);
		\filldraw (2,1) circle (1.0/1.9pt);
 		\filldraw (0.5,{sqrt(19/4)}) circle (1.0/1.9pt);

 		\filldraw (-0.5,1) circle (1.0/1.9pt);

 		\draw (-0.12,0.1) node {$z_1$};
 		\draw (1.07,0.4) node {$z_2$};
 		\draw (2.1,0.9) node {$z_3$};
 		\draw (2.1,2.1) node {$z_4$};
 		\draw (0.45,2.27) node {$z_5$};
 		\draw (-1.05,0.9) node {$z_6$};
 		\draw (-0.29,0.97) node {$z_7$};
		
		\draw [dashed] (-1.2,0) -- (2.2,0);
		
 		\end{tikzpicture}}
\!$\stackrel{f\circ g}{\longrightarrow}$\!
		\raisebox{-.5\height}{
 		\begin{tikzpicture}[scale=2.4]
		\draw [dashed] (0,0) circle (1);
	\filldraw[fill=white] (1,0) circle (1.0/2.4pt);

	\coordinate (z1) at (-1,0);
	\coordinate (z2) at (1/9,-8/9);
	\coordinate (z3) at (2/3,-2/3);
	\coordinate (z4) at (7/9,-4/9);
	\coordinate (z5) at (2/3,-1/6);
	\coordinate (z6) at (1/3,2/3);
	\coordinate (z7) at (1/9,4/9);
	
	\fill[medgrey] (z1) -- (z2) -- (z3) -- (z4) -- (z5) -- (z6);

	\draw [] (z1) -- (z2);
	\draw [] (z2) -- (z3);
	\draw [] (z3) -- (z4);
	\draw [] (z4) -- (z5);
	\draw [] (z5) -- (z6);
	\draw [] (z6) -- (z1);
	
	\draw [] (z1) -- (z3);
	\draw [] (z1) -- (z4);
	\draw [] (z1) -- (z5);
	\draw [] (z1) -- (z6);
	\draw [] (z1) -- (z7);
	\draw [] (z2) -- (z4);
	\draw [] (z2) -- (z5);
	\draw [] (z2) -- (z6);
	\draw [] (z2) -- (z7);
	\draw [] (z3) -- (z5);
	\draw [] (z3) -- (z6);
	\draw [] (z3) -- (z7);
	\draw [] (z4) -- (z7);
	\draw [] (z5) -- (z6);
	\draw [] (z6) -- (z7);

		\filldraw (z1) circle (1.0/2.4pt);
		\filldraw (z2) circle (1.0/2.4pt);
		\filldraw (z3) circle (1.0/2.4pt);
		\filldraw (z4) circle (1.0/2.4pt);
		\filldraw (z5) circle (1.0/2.4pt);
		\filldraw (z6) circle (1.0/2.4pt);
		\filldraw (z7) circle (1.0/2.4pt);
		
		\draw (-1.1,-0.05) node {$w_1$};
		\draw (-0.0,-0.92) node {$w_2$};
		\draw (0.8,-0.8) node {$w_3$};
		\draw (1,-0.5) node {$w_4$};
		\draw (0.8,-0.15) node {$w_5$};
		\draw (0.33,0.74) node {$w_6$};
		\draw (0.02,0.29) node {$w_7$};

 		\end{tikzpicture}}
 	\end{center}
 	\caption{Illustration of $f\circ g$ and  Lemma~\ref{lem:arc-convex2}. The one-to-one map $f\circ g$ of \eqref{eq:fg-klein} maps $\D(z_1,\dots,z_7)$ (a hyperbolic polygon) into a Euclidean polygon. We denote the mapped points as $w_i=f(g(z_i))$ for $i=1,\dots,7$. The equivalent Euclidean geometry tells us that $\D(z_1,\dots,z_7)$ is ``convex''  and can be enclosed by the curve through $z_1\rightarrow z_2\rightarrow z_3\rightarrow z_4\rightarrow z_6\rightarrow z_1$.
Note that $z_5$ and $z_7$ are not necessary in the description of the boundary.
}
 	\label{fig:hyperbolic}
 \end{figure}
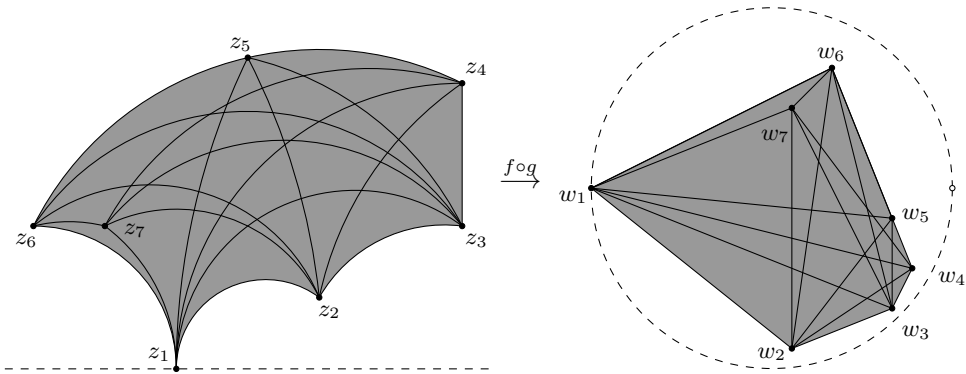
 \clearpage
 }


This construction of $\rac$ gives rise to the classical \emph{Poincar\'e half-plane model} of hyperbolic (non-Euclidean) geometry, where a $\rac(z_1,z_2)$ and $\C(z_1,z_2)\cap \complex^+$ are, respectively, the ``line segment'' between $z_1$ and $z_2$ and the ``line'' through $z_1$ and $z_2$ in the hyperbolic space \cite{beltrami_eugenio_1868_2243105,Poincare}.
The \emph{Beltrami--Klein model} maps the Poincar\'e half-plane model onto the unit disk and $\rac$ to straight line segments \cite{Klein1873,Beltrami1868}.
Specifically, the one-to-one map 
\begin{equation}
f\circ g\colon \complex^+\rightarrow \{z\in \complex:|z|\le 1,\,z\ne 1\}
\label{eq:fg-klein}
\end{equation}
defined by 
\[
f(z)= \frac{2z}{1+|z|^2},
\qquad
g(z)= \frac{z-i}{z+i}
\]
maps the Poincar\'e half-plane model to the Beltrami--Klein model
while mapping hyperbolic line segments $\rac$ into Euclidean straight line segments.
The Beltrami--Klein model demonstrates that any qualitative statement about convexity in the Euclidean plane is equivalent to an analogous statement in the Poincar\'e half-plane model.
See Figure~\ref{fig:hyperbolic}.

\begin{lemma}
\label{lem:arc-convex2}
Let $z_1,\dots,z_m\in\complex^+$ and $m\ge 1$.
Then $\D(z_1,\dots,z_m)$ is ``convex'' in the following non-Euclidean sense:
\[
w_1,w_2\in \D(z_1,\dots,z_m)
\quad\Rightarrow\quad
\rac(w_1,w_2)\subseteq \D(z_1,\dots,z_m).
\]
If $\D(z_1,\dots,z_m)$ has an interior, then there is $\{\zeta_1,\dots,\zeta_q\}\subseteq\{z_1,\dots,z_m\}$ such that 
\[
\rac(\zeta_1,\zeta_2)\cup \rac(\zeta_2,\zeta_3)\cup \cdots \cup 
\rac(\zeta_{q-1},\zeta_q)\cup \rac(\zeta_q,\zeta_1)
\]
is a Jordan curve, and the region the curve encloses is $\D(z_1,\dots,z_m)$.
\end{lemma}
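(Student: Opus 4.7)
The plan is to push the entire construction through the bijection $h = f \circ g$ of \eqref{eq:fg-klein} so that the hyperbolic arc-edge polygon becomes an ordinary Euclidean polygon, after which both conclusions reduce to standard facts about the convex hull of finitely many points in the plane.

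First I would set $w_i = h(z_i)$ for $i=1,\dots,m$ and record the properties of $h$: it is a homeomorphism from $\complex^+$ onto $\{z \in \complex : |z| \le 1,\ z\ne 1\}$ that carries each hyperbolic segment $\rac(z_i,z_j)$ bijectively onto the Euclidean segment $[w_i,w_j]$. Since $h$ is a homeomorphism, it sends Jordan curves to Jordan curves and maps enclosed regions to enclosed regions. Writing $C = \mathrm{conv}(w_1,\dots,w_m)$ for the Euclidean convex hull, I would then argue that $h(\D(z_1,\dots,z_m))$ equals $\bigcup_{i,j}[w_i,w_j]$ together with the region enclosed by that union, which by standard planar convex geometry is precisely $C$ (using the alternative characterization of $\D$ as the smallest simply connected set containing the pairwise arcs, mirrored on the Klein side by the fact that $C$ is the smallest simply connected set containing the pairwise segments).

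For the first conclusion, given $w_1,w_2 \in \D(z_1,\dots,z_m)$, the images $h(w_1),h(w_2)$ lie in $C$, so by ordinary Euclidean convexity $[h(w_1),h(w_2)] \subseteq C$; pulling back through $h^{-1}$ and using $h^{-1}([h(w_1),h(w_2)]) = \rac(w_1,w_2)$ gives $\rac(w_1,w_2) \subseteq \D(z_1,\dots,z_m)$. For the second conclusion, if $\D(z_1,\dots,z_m)$ has nonempty interior then so does $C$, so $C$ is a nondegenerate Euclidean polygon whose boundary is a Jordan curve composed of Euclidean segments joining a subset $\{w_{i_1},\dots,w_{i_q}\}$ of extreme points in cyclic order. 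Setting $\zeta_k = z_{i_k}$ and applying $h^{-1}$ gives the Jordan curve $\rac(\zeta_1,\zeta_2) \cup \cdots \cup \rac(\zeta_q,\zeta_1)$ whose enclosed region is $\D(z_1,\dots,z_m)$.

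The main obstacle is the identification $h(\D(z_1,\dots,z_m)) = C$ in the first paragraph; everything after that is automatic. Two subtleties arise there. First, some of the $z_i$ may lie on the real axis, in which case $h$ sends them to the boundary circle of the Klein disk; one must check that $\rac$ and $[\cdot,\cdot]$ still correspond correctly in this boundary case, including the convention whereby $\rac(z_i,z_j)$ is a full semicircle when both $z_i$ and $z_j$ are real (which matches the Euclidean chord between the two boundary points of the disk). Second, rigorously matching ``region enclosed by $S$'' in the definition of $\D$ with the planar convex hull requires invoking the alternative ``smallest simply connected set'' description noted just before the lemma; once that is in place, the equivalence $h(\D) = C$ follows directly from the fact that $h$ preserves simple connectedness and maps arcs to chords.
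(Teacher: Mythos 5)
Your proposal is correct and follows essentially the same route as the paper: both transfer the problem through the Beltrami--Klein map $f\circ g$, identify $\D(z_1,\dots,z_m)$ with the Euclidean convex hull of the images, and pull the convexity and polygonal-boundary facts back with $(f\circ g)^{-1}$. Your write-up is simply a more detailed version of the paper's one-paragraph sketch, and the two subtleties you flag (boundary points of the Klein disk when some $z_i$ are real, and matching ``region enclosed by $S$'' with the convex hull via the smallest-simply-connected-set characterization) are exactly the points the paper leaves implicit.
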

\begin{proof}
Let $w_1,\dots,w_m$ be in the unit complex disk. Consider the construction 
\[
\tilde{S}=\bigcup_{1\le i,j\le m}[w_i,w_j]
\]
and 
\[
\widetilde{\mathrm{Poly}}(w_1,\dots,w_m)=
\tilde{S}\cup \{\text{region enclosed by $\tilde{S}$}\}.
\]
This is the (Euclidean) 2D polyhedron given as the convex hull of $w_1,\dots,w_m$.
The Euclidean convex hull has the properties analogous to those in the Lemma statement, and we use the map $(f\circ g)^{-1}$, where $f\circ g$ is as given by \eqref{eq:fg-klein} to map the properties to our setup.
\end{proof}

\section{SRGs of block-diagonal matrices}
We characterize the SRG of block-diagonal matrices as follows.
\begin{theorem}
\label{mat-main-thm}
Let $A_1,\dots,A_m$ be square matrices, where $m\ge 1$.
Then
\[
\cG^+\left(\Diag(A_1,\dots,A_m)\right)
=\bigcup_{\substack{z_i\in \mathcal{G}^+(A_i)\\
i=1,\dots,m}}\D(z_1,\dots,z_m).
\]
\end{theorem}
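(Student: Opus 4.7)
The plan is to translate the SRG of the block-diagonal matrix into a convex-combination problem for the pair $(\Re w, |w|^2)$, and then identify this convex hull with $\D(z_1,\ldots,z_m)$ via a geometric bijection in the spirit of the Beltrami--Klein mapping.

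First I would unfold the block structure. For $A=\Diag(A_1,\ldots,A_m)$ and $x=(x_1,\ldots,x_m)$ with $x_i\in\reals^{n_i}$, writing $\lambda_i=\|x_i\|^2/\|x\|^2$ and $z_i=z_{A_i}(x_i)$ whenever $x_i\ne 0$, the identities $\|Ax\|^2=\sum_i\|A_ix_i\|^2$ and $x^TAx=\sum_i x_i^TA_ix_i$ yield
\[
|z_A(x)|^2=\sum_{i:\,x_i\ne 0}\lambda_i|z_i|^2,\qquad \Re z_A(x)=\sum_{i:\,x_i\ne 0}\lambda_i\Re z_i,
\]
where $\lambda_i\ge 0$ and $\sum_i\lambda_i=1$. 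Since $z_A(x)\in\complex^+$ is uniquely determined by its real part and modulus, the problem reduces to characterizing which $w\in\complex^+$ admit weights $(\mu_i)$ realizing $\Re w=\sum_i\mu_i\Re z_i$ and $|w|^2=\sum_i\mu_i|z_i|^2$.

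Next I would establish the geometric bridge: $w\in\D(z_1,\ldots,z_m)$ iff $w\in\complex^+$ and such weights exist. Consider the map $\phi\colon\complex^+\to\{(u,v):v\ge u^2\}$ defined by $\phi(w)=(\Re w,|w|^2)$, which is clearly a homeomorphism with inverse $(u,v)\mapsto u+i\sqrt{v-u^2}$. Any circle centered on the real axis has equation $|w|^2-2c\Re w=\text{const}$, i.e., $v-2cu=\text{const}$, so $\phi$ sends each arc $\rac(z_1,z_2)$ bijectively to the Euclidean segment $[\phi(z_1),\phi(z_2)]$. Because $\phi$ is a homeomorphism preserving enclosed regions, $\phi(\D(z_1,\ldots,z_m))$ equals the Euclidean convex hull of $\phi(z_1),\ldots,\phi(z_m)$, which is precisely the set of $(u,v)$ of the desired form. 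This is the same maneuver as Lemma~\ref{lem:arc-convex2}, using $\phi$ in place of $f\circ g$ so that Euclidean convex combinations correspond directly to the weighted sums appearing in the block-matrix computation.

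With the bridge in place, both inclusions are routine. For $\subseteq$, given $x\ne 0$, set $z_i=z_{A_i}(x_i)$ whenever $x_i\ne 0$ and pick arbitrary $z_i\in\cG^+(A_i)$ otherwise (those terms carry weight zero); the identities above place $z_A(x)$ in $\D(z_1,\ldots,z_m)$ via the bridge. For $\supseteq$, given any $z_i\in\cG^+(A_i)$ and any $w\in\D(z_1,\ldots,z_m)$, extract weights $\mu_i$, pick $y_i\ne 0$ with $z_{A_i}(y_i)=z_i$, and rescale to $x_i=\alpha_iy_i$ with $\|x_i\|^2=\mu_i$ (using scale-invariance of $z_{A_i}$), setting $x_i=0$ when $\mu_i=0$; then $x\ne 0$ and $z_A(x)=w$ by matching real parts and moduli. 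The main obstacle I anticipate is the geometric bridge: the arc-to-segment property for $\phi$ is a one-line computation, but extending it from the boundary arcs to the full arc-edge polygon requires a Jordan-curve argument showing $\phi$ preserves enclosed regions, or equivalently a short reduction to the Beltrami--Klein formulation already treated in Lemma~\ref{lem:arc-convex2}. Everything else is routine bookkeeping.
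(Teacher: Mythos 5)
Your proof is correct, but it takes a genuinely different and arguably cleaner route than the paper's. The paper first reduces to showing that the image of $\spanspan(\mathbf{u}_1,\dots,\mathbf{u}_m)\setminus\{0\}$ under $z_{\mathbf{A}}$ equals the arc-edge polygon of the $z_{\mathbf{A}}(\mathbf{u}_i)$'s, then proves the two-point case by an explicit circle computation (Step~2), obtains the inclusion $\subseteq$ by induction together with the hyperbolic convexity of Lemma~\ref{lem:arc-convex2}, and obtains $\supseteq$ by a topological argument: a closed curve through the vertices has nonzero winding number about any interior point, and this curve contracts inside $z_{\mathbf{A}}(\spanspan(\cdot)\setminus\{0\})$ because the punctured sphere is simply connected. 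Your map $\phi(w)=(\Re w,|w|^2)$ sidesteps all of this: it simultaneously straightens the hyperbolic geodesics (so $\D$ becomes a genuine Euclidean convex hull) and linearizes $z_{\mathbf{A}}$ (the block weights $\lambda_i=\|x_i\|^2/\|x\|^2$ become exactly the Euclidean convex-combination coefficients). As a result, both inclusions reduce to elementary algebra on weights, and the winding-number argument disappears. What the paper's $f\circ g$ in Lemma~\ref{lem:arc-convex2} does not do, and what makes your $\phi$ the right choice here, is match convex coefficients to the $\|x_i\|^2$-weights; that is the genuinely new observation in your approach.

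Two small points deserve explicit mention in a write-up. First, you need $\phi$ to be a \emph{proper} homeomorphism (preimages of compacts are compact) so that bounded components map to bounded components; this follows from $|\phi(w)|\to\infty$ as $|w|\to\infty$, and is what actually justifies ``preserving enclosed regions'' after you invoke the Jordan curve theorem. Second, one should check that the interior of any Jordan curve lying in the closed upper half-plane stays in the upper half-plane (otherwise the enclosed region might escape $\complex^+$); this is immediate by a vertical-ray argument, but it is the reason $\phi(\D)=\widetilde{\D}$ is not completely tautological. With these two remarks spelled out, the argument is complete.
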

\begin{proof}
When $m=1$, there is nothing to show.
Assume $m\ge 2$.


\vspace{0.05in}
\noindent\textbf{Step 1.}
Let $A_i\in\reals^{n_i\times n_i}$  and $u_i\in \reals^{n_i}$ for $i=1,\dots,m$.
We use the notation $n=n_1+\dots+n_m$,
\[
\mathbf{u}=
\begin{bmatrix}
u_1\\\vdots\\u_m
\end{bmatrix}\in \reals^n,
\qquad
\mathbf{u}_i=
\begin{bmatrix}
0\\\vdots\\0\\u_i\\0\\\vdots\\0
\end{bmatrix}\in \reals^n\quad\text{for }i=1,\dots,m,
\]
and $\mathbf{A}=\Diag(A_1,\dots,A_m)\in\reals^{n\times n}$.
Then we have 
\begin{align}
\cG^+\left(\Diag(A_1,\dots,A_m)\right)&=
\bigcup_{\substack{\mathbf{u}\in \reals^{n}\backslash\{0\}}}z_\mathbf{A}(\mathbf{u})\nonumber\\
&=
\bigcup_{\substack{u_i\in\reals^{n_i},\,u_i\ne 0\\i=1,\dots,m}}z_\mathbf{A}\left(\spanspan(\mathbf{u}_1,\dots,\mathbf{u}_m)\backslash\{0\}\right)
\label{thm:first_eq}
\end{align}
and 
\begin{align}
\bigcup_{\substack{u_i\in\reals^{n_i},\,u_i\ne 0\\i=1,\dots,m}}\D\left(z_{\mathbf{A}}(\mathbf{u}_1),\dots,z_{\mathbf{A}}(\mathbf{u}_m)\right)
&=
\bigcup_{\substack{u_i\in\reals^{n_i},\,u_i\ne 0\\i=1,\dots,m}}\D\left(z_{A_1}(u_1),\dots,z_{A_m}(u_m)\right)
\nonumber\\
&=\bigcup_{\substack{z_i\in \cG^+(A_i)\\i=1,\dots,m}}\D\left(z_1,\dots,z_m\right).
\label{thm:second_eq}
\end{align}
To clarify, $\mathbf{u}_i$ depends on $u_i$ for $i=1,\dots,m$.
In the following, we show
\begin{equation}
z_\mathbf{A}\left(\spanspan(\mathbf{u}_1,\dots,\mathbf{u}_m)\backslash\{0\}\right)
=
\D\left(z_{\mathbf{A}}(\mathbf{u}_1),\dots,z_{\mathbf{A}}(\mathbf{u}_m)\right)
\label{thm:third_eq}
\end{equation}
for all $\mathbf{u}_i$ given by $u_i\ne 0$ for $i=1,\dots,m$.
Once \eqref{thm:third_eq} is proved, \eqref{thm:first_eq} and \eqref{thm:second_eq} are equivalent and the proof is complete.


\vspace{0.05in}
\noindent\textbf{Step 2.}
We show the following intermediate result:
for all nonzero $\mathbf{u},\mathbf{v}\in \reals^n$ such that
\begin{equation}
\langle \mathbf{u},\mathbf{v}\rangle=\langle \mathbf{A}\mathbf{u},\mathbf{v}\rangle=\langle \mathbf{u},\mathbf{A}\mathbf{v}\rangle=\langle \mathbf{A}\mathbf{u},\mathbf{A}\mathbf{v}\rangle=0,
\label{eq:step2-cond} 
\end{equation}
we have
\begin{equation}
z_\mathbf{A}(\spanspan(\mathbf{u},\mathbf{v})\backslash \{0\})= \rac(z_\mathbf{A}(\mathbf{u}),z_\mathbf{A}(\mathbf{v})).
\label{eq:step2}
\end{equation}

First, consider the case $\Re z_\mathbf{A}(\mathbf{u})\ne \Re z_\mathbf{A}(\mathbf{v})$.
Let the circle $\C(z_\mathbf{A}(\mathbf{u}),z_\mathbf{A}(\mathbf{v}))$ be centered at $(t,0)$ with $t\in \reals$ and radius $r\ge 0$.
Then $z_\mathbf{A}(\mathbf{u})$ and $z_\mathbf{A}(\mathbf{v})$ satisfy
\begin{align*}
(\Re z_\mathbf{A}(\mathbf{u})-t)^2+(\Im z_\mathbf{A}(\mathbf{u}))^2&=r^2\\
(\Re z_\mathbf{A}(\mathbf{v})-t)^2+(\Im z_\mathbf{A}(\mathbf{v}))^2&=r^2.
\end{align*}
This is equivalent to
\begin{align*}
&\langle \mathbf{A} \mathbf{u},\mathbf{A} \mathbf{u}\rangle -2t\langle \mathbf{A} \mathbf{u}, \mathbf{u}\rangle +(t^2-r^2)\langle  \mathbf{u}, \mathbf{u}\rangle=0\\
&\langle \mathbf{A} \mathbf{v},\mathbf{A} \mathbf{v}\rangle -2t\langle \mathbf{A} \mathbf{v}, \mathbf{v}\rangle +(t^2-r^2)\langle  \mathbf{v}, \mathbf{v}\rangle=0.
\end{align*}
Let $\alpha_1,\alpha_2\in\reals$ and $\mathbf{w}=\alpha_1\mathbf{u}+\alpha_2\mathbf{v}$. Assume $\mathbf{w}\ne 0$.
Using \eqref{eq:step2-cond} and basic calculations, we have
\[
\langle \mathbf{A}\mathbf{w} ,\mathbf{A} \mathbf{w}\rangle -2t\langle \mathbf{A} \mathbf{w}, \mathbf{w}\rangle +(t^2-r^2)\langle  \mathbf{w}, \mathbf{w}\rangle=0,
\]
and this is equivalent to
\begin{align*}
(\Re z_\mathbf{A}(\mathbf{w})-t)^2+(\Im z_\mathbf{A}(\mathbf{w}))^2&=r^2.
\end{align*}
Therefore 
\[
z_\mathbf{A}(\mathbf{w})=z_\mathbf{A}(\alpha_1\mathbf{u}+\alpha_2\mathbf{v})\in \C(z_\mathbf{A}(\mathbf{u}),z_\mathbf{A}(\mathbf{v})).
\]
Notice that
\[
\Re\,z_\mathbf{A}(\mathbf{w})=\frac{\langle \mathbf{A}\mathbf{w} , \mathbf{w}\rangle}{\langle\mathbf{w}, \mathbf{w}\rangle}=\frac{\alpha_1^2\langle \mathbf{A}\mathbf{u},\mathbf{u}\rangle+\alpha_2^2\langle \mathbf{A}\mathbf{v},\mathbf{v}\rangle}{\alpha_1^2\langle\mathbf{u} , \mathbf{u}\rangle+\alpha_2^2\langle\mathbf{v} , \mathbf{v}\rangle}
\]
fills the interval $[\Re\,z_A(\mathbf{u}),\Re\,z_A(\mathbf{v})]$ as $\alpha_1$ and $\alpha_2$ varies. So we have
\[
\bigcup_{
\substack{\alpha_1,\alpha_2\in\reals\\
\alpha_1\mathbf{u}+\alpha_2\mathbf{v}\ne 0}}
z_\mathbf{A}(\alpha_1\mathbf{u}+\alpha_2\mathbf{v})= \rac(z_\mathbf{A}(\mathbf{u}),z_\mathbf{A}(\mathbf{v}))
\]
and we conclude \eqref{eq:step2}.

Next, consider the case $\Re z_\mathbf{A}(\mathbf{u})= \Re z_\mathbf{A}(\mathbf{v})$.
Note that
\[
\Re z_\mathbf{A}(\mathbf{u})=
\frac{\langle \mathbf{A}\mathbf{u} , \mathbf{u}\rangle}{\langle\mathbf{u}, \mathbf{u}\rangle},
\qquad
\Re z_\mathbf{A}(\mathbf{v})=\frac{\langle \mathbf{A}\mathbf{v} , \mathbf{v}\rangle}{\langle\mathbf{v}, \mathbf{v}\rangle}.
\]
Let $\alpha_1,\alpha_2\in\reals$ and $\mathbf{w}=\alpha_1\mathbf{u}+\alpha_2\mathbf{v}$.
Assume $\mathbf{w}\neq0$.
Using \eqref{eq:step2-cond} and basic calculations, we have 
\[
\Re\,z_\mathbf{A}(\mathbf{w})=\frac{\langle \mathbf{A}\mathbf{w} , \mathbf{w}\rangle}{\langle\mathbf{w}, \mathbf{w}\rangle}=\Re\,z_\mathbf{A}(\mathbf{u})=\Re\,z_\mathbf{A}(\mathbf{v}).
\]
Notice that
\[
|z_\mathbf{A}(\mathbf{w})|^2=\frac{\langle \mathbf{A}\mathbf{w},\mathbf{A}\mathbf{w}\rangle}{\langle \mathbf{w},\mathbf{w}\rangle}=\frac{\alpha_1^2\langle \mathbf{A}\mathbf{u},\mathbf{A}\mathbf{u}\rangle+\alpha_2^2\langle \mathbf{A}\mathbf{v},\mathbf{A}\mathbf{v}\rangle}{\alpha_1^2\langle\mathbf{u} , \mathbf{u}\rangle+\alpha_2^2\langle\mathbf{v} , \mathbf{v}\rangle}
\]
fills the interval $[|z_\mathbf{A}(\mathbf{u})|^2,|z_\mathbf{A}(\mathbf{v})|^2]$ as $\alpha_1$ and $\alpha_2$ varies.
So $\Im\,z_\mathbf{A}(\mathbf{w})$ fills the interval $[\Im\,z_\mathbf{A}(\mathbf{u}),\Im\,z_\mathbf{A}(\mathbf{v})]$ as $\alpha_1$ and $\alpha_2$ varies, and we conclude
\[
z_\mathbf{A}(\spanspan(\mathbf{u},\mathbf{v})\backslash \{0\})=[z_\mathbf{A}(\mathbf{u}),z_\mathbf{A}(\mathbf{v})]= \rac(z_\mathbf{A}(\mathbf{u}),z_\mathbf{A}(\mathbf{v})).
\]




\vspace{0.05in}
\noindent\textbf{Step 3.}
We show
\begin{equation}
z_\mathbf{A}\left(\spanspan(\mathbf{u}_1,\dots,\mathbf{u}_m)\backslash\{0\}\right)
\subseteq
\D\left(z_{\mathbf{A}}(\mathbf{u}_1),\dots,z_{\mathbf{A}}(\mathbf{u}_m)\right)
\label{eq:step6-inclusion}
\end{equation}
by induction. Clearly
\[
z_\mathbf{A}\left(\spanspan(\mathbf{u}_1)\backslash\{0\}\right)
=
\D\left(z_{\mathbf{A}}(\mathbf{u}_1)\right).
\]
Now assume \eqref{eq:step6-inclusion} holds for $m-1$.
By \eqref{eq:step2}, we have
\[
z_\mathbf{A}\left(\spanspan(\mathbf{u}_1,\dots,\mathbf{u}_m)\backslash\{0\}\right)=
 \bigcup_{\zeta\in z_\mathbf{A}\left(\spanspan(\mathbf{u}_1,\dots,\mathbf{u}_{m-1})\backslash\{0\}\right)}\rac(\zeta,z_{\mathbf{A}}(\mathbf{u}_m)).
\]
By the induction hypothesis, $\zeta\in z_\mathbf{A}\left(\spanspan(\mathbf{u}_1,\dots,\mathbf{u}_{m-1})\backslash\{0\}\right)$, implies
\[
\zeta\in 
\D\left(z_{\mathbf{A}}(\mathbf{u}_1),\dots,z_{\mathbf{A}}(\mathbf{u}_{m-1})\right)
\subseteq
\D\left(z_{\mathbf{A}}(\mathbf{u}_1),\dots,z_{\mathbf{A}}(\mathbf{u}_m)\right).
\]
By construction,
\[
z_{\mathbf{A}}(\mathbf{u}_m)\in 
\D\left(z_{\mathbf{A}}(\mathbf{u}_1),\dots,z_{\mathbf{A}}(\mathbf{u}_m)\right).
\]
``Convexity'' of Lemma~\ref{lem:arc-convex2} implies
\[
 \bigcup_{\zeta\in z_\mathbf{A}\left(\spanspan(\mathbf{u}_1,\dots,\mathbf{u}_{m-1})\backslash\{0\}\right)}\rac(\zeta,z_{\mathbf{A}}(\mathbf{u}_m))
 \subseteq \D\left(z_{\mathbf{A}}(\mathbf{u}_1),\dots,z_{\mathbf{A}}(\mathbf{u}_m)\right),
\]
and we conclude \eqref{eq:step6-inclusion}.

\vspace{0.05in}
\noindent\textbf{Step 4.}
We show
\begin{equation}
z_\mathbf{A}\left(\spanspan(\mathbf{u}_1,\dots,\mathbf{u}_m)\backslash\{0\}\right)
\supseteq
\D\left(z_{\mathbf{A}}(\mathbf{u}_1),\dots,z_{\mathbf{A}}(\mathbf{u}_m)\right).
\label{eq:step3-identity}
\end{equation}

First, consider the case where $\D\left(z_{\mathbf{A}}(\mathbf{u}_1),\dots,z_{\mathbf{A}}(\mathbf{u}_m)\right)$ has no interior.
In 2D Euclidean geometry, a polygon has no interior when it is a single line segment or a point.
The Beltrami--Klein model provides us with an equivalent statement in hyperbolic geometry: the ``polygon''  can be expressed as $\D\left(z_{\mathbf{A}}(\mathbf{u}_1),\dots,z_{\mathbf{A}}(\mathbf{u}_m)\right)=\rac(z_{\mathbf{A}}(\boldsymbol{\mu}_1),z_{\mathbf{A}}(\boldsymbol{\mu}_2))$ where $\boldsymbol{\mu}_1,\boldsymbol{\mu}_2\in \{\mathbf{u}_1,\dots,\mathbf{u}_m\}$.
By the reasoning of Step 2, we conclude 
\begin{align*}
z_\mathbf{A}\left(\spanspan(\mathbf{u}_1,\dots,\mathbf{u}_m)\backslash\{0\}\right)
&\supseteq
z_\mathbf{A}\left(\spanspan(\boldsymbol{\mu}_1,\boldsymbol{\mu}_2)\backslash\{0\}\right)\\
&=
\rac(z_{\mathbf{A}}(\boldsymbol{\mu}_1),z_{\mathbf{A}}(\boldsymbol{\mu}_2))=\D\left(z_{\mathbf{A}}(\mathbf{u}_1),\dots,z_{\mathbf{A}}(\mathbf{u}_m)\right).
\end{align*}

Next, consider the case where $\D\left(z_{\mathbf{A}}(\mathbf{u}_1),\dots,z_{\mathbf{A}}(\mathbf{u}_m)\right)$ has an interior. 
In this case, $\dim \spanspan(\mathbf{u}_1,\dots,\mathbf{u}_m)\ge 3$ by the arguments of Step 2.
Assume for contradiction that there is a  $z\in  \D\left(z_{\mathbf{A}}(\mathbf{u}_1),\dots,z_{\mathbf{A}}(\mathbf{u}_m)\right)$ but $z\notin z_\mathbf{A}\left(\spanspan(\mathbf{u}_1,\dots,\mathbf{u}_m)\backslash\{0\}\right)$.
Let $\zeta_1,\dots,\zeta_q$ be vertices provided by Lemma~\ref{lem:arc-convex2}.
There exists corresponding  $\{\boldsymbol{\mu}_1,\dots,\boldsymbol{\mu}_q\}\subseteq\{\textbf{u}_1,\dots,\textbf{u}_m\}$ such that $\zeta_i=z_\mathbf{A}(\boldsymbol{\mu}_i)$
for $i=1,\dots,q$.
Define the curve 
\[
\boldsymbol{\eta}(t):[1,q+1]\rightarrow \spanspan(\mathbf{u}_1,\dots,\mathbf{u}_m)\cap S^{m-1},
\]
where $S^{m-1}\subset \reals^m$ is the unit sphere, as
\[
\boldsymbol{\eta}(t)=\frac{\cos((t-p)\frac{\pi}{2})}{\|\boldsymbol{\mu}_p\|}\boldsymbol{\mu}_{p}+\frac{\sin((t-p)\frac{\pi}{2})}{\|\boldsymbol{\mu}_{p+1}\|}\boldsymbol{\mu}_{p+1},\qquad \text{ for }p\leq t\leq p+1.
\]
where we regard $\boldsymbol{\mu}_{q+1}$ as $\boldsymbol{\mu}_{1}$. Then $\{\gamma(t)\}_{\{t\in[1,q+1]\}}=\{z_{\mathbf{A}}(\boldsymbol{\eta}(t))\}_{t\in [1,q+1]}$ encloses $z$.

Since $\spanspan(\mathbf{u}_1,\dots,\mathbf{u}_m)\cap S^{m-1}$ is simply connected, we can continuously contract $\{\boldsymbol{\eta}(t)\}_{t\in [1,q+1]}$ to a point in  $\spanspan(\mathbf{u}_1,\dots,\mathbf{u}_m)\cap S^{m-1}$ and the curve under the map $z_\mathbf{A}$ continuously contracts to a point in $z_\mathbf{A}\left(\spanspan(\mathbf{u}_1,\dots,\mathbf{u}_m)\backslash\{0\}\right)$.
However, this is not possible as
$z\notin z_\mathbf{A}\left(\spanspan(\mathbf{u}_1,\dots,\mathbf{u}_m)\backslash\{0\}\right)$ and $\{\gamma(t)\}_{\{t\in[1,q+1]\}}$ has a nonzero winding number around $z$.
We have a contradiction and we conclude  $z\in z_\mathbf{A}\left(\spanspan(\mathbf{u}_1,\dots,\mathbf{u}_m)\backslash\{0\}\right)$.
\end{proof}

\section{SRGs of normal matrices}
We now use Theorem~\ref{mat-main-thm} to fully characterize the SRG of normal matrices.





\begin{figure}[h]
    \centering
\begin{tikzpicture}[scale=0.5]
\begin{scope}
\clip (0,-4.6) rectangle (0,4.6);
\end{scope}

\draw (-2.5,3.5) node {$\cG\left(
\begin{bmatrix}
1&2\\
3&4\\
\end{bmatrix}
\right)=$};

\draw[line width=1pt] (2.5,0.5) circle (2.9155);
\draw[line width=1pt] (2.5,-0.5) circle (2.9155);

\draw [<->] (-2,0) -- (7,0);
\draw [<->] (0,-4) -- (0,4);

\end{tikzpicture}
\begin{tikzpicture}[scale=1]
\begin{scope}
\clip (0,-2.3) rectangle (0,2.3);
\end{scope}

\draw (-1.5,1.75) node {$\cG\left(
\begin{bmatrix}
\frac{1}{2}&2\\
-\frac{1}{2}&\frac{1}{2}\\
\end{bmatrix}
\right)=$};

\draw[line width=1pt] (0.5,1.25) circle (0.75);
\draw[line width=1pt] (0.5,-1.25) circle (0.75);

\draw [<->] (-1,0) -- (2,0);
\draw [<->] (0,-2.25) -- (0,2.25);

\end{tikzpicture}
    \caption{Illustration of Proposition~\ref{mat-2*2}}
    \label{mat_2_circ}
\end{figure}
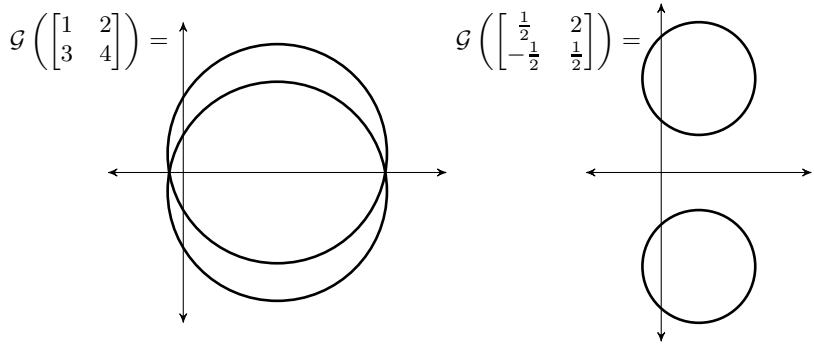

\begin{proposition}
\label{mat-2*2}
Let $A=\left[
\begin{array}{cc}	
a_1&b_1\\
b_2&a_2\\
\end{array}\right]\in \reals^{2\times 2}$.
Then $\mathcal{G}(A)$ consists of two circles centered at $\left(\frac{a_1+a_2}{2},\pm\frac{b_1-b_2}{2}\right)$ with radius $\sqrt{\left(\frac{a_1-a_2}{2}\right)^2+\left(\frac{b_1+b_2}{2}\right)^2}$.
\end{proposition}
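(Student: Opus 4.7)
The plan is to carry out a direct parameterized computation that identifies $\cG(A)$ as the image of a circle. By homogeneity of $z_A$, I would restrict attention to unit vectors and write $x = (\cos\theta,\sin\theta)^T$ with $\theta\in[0,2\pi)$. For such $x$, the real part is $\Re z_A(x) = \langle Ax,x\rangle$, and the Lagrange identity in $\reals^2$ gives $\Im z_A(x) = |\det[Ax\mid x]|$, so the entire geometry is encoded by these two scalar functions of $\theta$.

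Expanding both expressions and applying the double-angle formulas $\cos^2\theta=\frac{1+\cos 2\theta}{2}$, $\sin^2\theta=\frac{1-\cos 2\theta}{2}$, $2\sin\theta\cos\theta=\sin 2\theta$, I would obtain
\begin{align*}
\Re z_A(x) &= \tfrac{a_1+a_2}{2} + \tfrac{a_1-a_2}{2}\cos 2\theta + \tfrac{b_1+b_2}{2}\sin 2\theta,\\
\det[Ax\mid x] &= \tfrac{b_1-b_2}{2} + \tfrac{a_1-a_2}{2}\sin 2\theta - \tfrac{b_1+b_2}{2}\cos 2\theta.
\end{align*}
The linear map sending $(\cos 2\theta,\sin 2\theta)$ to $(\Re z_A(x) - \tfrac{a_1+a_2}{2},\; \det[Ax\mid x] - \tfrac{b_1-b_2}{2})$ is an orthogonal transformation scaled by $r := \sqrt{(\tfrac{a_1-a_2}{2})^2 + (\tfrac{b_1+b_2}{2})^2}$. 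Consequently, as $\theta$ ranges over $[0,2\pi)$, the pair $(\Re z_A(x),\,\det[Ax\mid x])$ traces the full circle $C$ centered at $(\tfrac{a_1+a_2}{2},\,\tfrac{b_1-b_2}{2})$ with radius $r$.

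To finish, I would observe that because $\Im z_A(x) = |\det[Ax\mid x]|$, each $z_A(x)$ is either the corresponding point of $C$ (when the determinant is nonnegative) or its reflection across the real axis (otherwise), so $\cG^+(A)$ consists of the upper halves of $C$ and its conjugate $\overline C$ (centered at $(\tfrac{a_1+a_2}{2},-\tfrac{b_1-b_2}{2})$). The conjugation-closure built into the definition of $\cG(A)$ then restores both full circles, giving $\cG(A)=C\cup\overline C$, as claimed.

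The argument is conceptually elementary; the only real work is the algebraic reduction in the two displayed identities. The one subtlety worth being careful about is the interaction between the absolute value in $\Im z_A(x)$ and the complex-conjugate symmetry of $\cG(A)$, which together guarantee that \emph{both} circles — not merely their upper halves — appear in $\cG(A)$, including the degenerate case $b_1=b_2$ in which the two circles coincide with the numerical range circle on the real axis.
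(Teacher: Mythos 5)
Your proof is correct and follows essentially the same route as the paper: parametrize the unit circle by $x_\theta=(\cos\theta,\sin\theta)^T$, reduce via double-angle identities to a rotated-plus-translated circle in the $(\Re z_A,\,\pm\Im z_A)$ plane, then apply the absolute value and conjugate-symmetry to recover both circles. Your framing of the signed imaginary part as $\det[Ax\mid x]$ via the Lagrange identity is a slightly cleaner way to arrive at the same algebraic expression that the paper computes directly, but the underlying calculation is the same.
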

\begin{proof}
Let
\[
x_\theta=
\begin{bmatrix}
\cos(\theta)\\\sin(\theta)
\end{bmatrix}\in \reals^2,
\qquad
T\left(\begin{bmatrix}x_1\\x_2\end{bmatrix}\right)=
\begin{bmatrix}
x_1\\|x_2|
\end{bmatrix}.
\]
The stated result follows from 
\[
\cG^+(A)=\{z_A(x_\theta):\theta\in[0,2\pi)\}
\]
and the calculations
\begin{align*}
z_A(x_\theta)
&=
\begin{bmatrix}
\frac{1}{2}\left(a_1+a_2+(a_1-a_2)\cos(2\theta)+(b_1+b_2)\sin(2\theta)\right)\\
\frac{1}{2}
\left|
-b_1+b_2+(b_1+b_2)\cos(2\theta)-(a_1-a_2)\sin(2\theta)
\right|
\end{bmatrix}\\
&=
T\bigg(
\begin{bmatrix}
\frac{a_1+a_2}{2}\\
-\frac{b_1-b_2}{2}
\end{bmatrix}+
\underbrace{
\begin{bmatrix}
\cos(-2\theta)& -\sin (-2\theta)\\
\sin (-2\theta)&\cos(-2\theta)
\end{bmatrix}}_{\text{rotation by $-2\theta$}}
\begin{bmatrix}
\frac{a_1-a_2}{2}\\
\frac{b_1+b_2}{2}
\end{bmatrix}\bigg).
\end{align*}
\end{proof}

\begin{figure}
	\begin{center}
	    \begin{subfigure}[b]{0.42\textwidth}
\begin{tikzpicture}[scale=1.2]
					\begin{scope}
						\clip (-1.25,-2.666666) rectangle (2.45,2.6666666);
						\fill[medgrey] (1,0) circle ({sqrt(5)});
					\end{scope}
					
					\fill[white] (5/8,0) circle (5/8);
					\fill[white] (-1,0) circle (1); 
					\begin{scope}
						\clip (-1.25,-2.3) rectangle (2.5,2.3);
						\fill[white] (27/8,0) circle ({sqrt((19/8)^2+(1/2)^2)}); 
					\end{scope}
					
					\filldraw (-1,1) circle (1.0/1.2pt);
					\filldraw (0,0) circle (1.0/1.2pt);
					\filldraw (1,0.5) circle (1.0/1.2pt);
					\filldraw (2,2) circle (1.0/1.2pt);
					
					\filldraw (-1,-1) circle (1.0/1.2pt);
					\filldraw (0,-0) circle (1.0/1.2pt);
					\filldraw (1,-0.5) circle (1.0/1.2pt);
					\filldraw (2,-2) circle (1.0/1.2pt);
					
					\draw (-1.1,0.85) node {$\lambda_6$};
					\draw (1.2,0.4) node {$\lambda_2$};
					\draw (2.2,2.0) node {$\lambda_4$};
					\draw (-1.1,-0.85) node {$\lambda_7$};
					\draw (1.2,-0.4) node {$\lambda_3$};
					\draw (2.2,-2.0) node {$\lambda_5$};
					\draw (-0.12,0.15) node {$\lambda_1$};
					
					\draw[<->] (-1.2,0) -- (2.4,0);					
				\end{tikzpicture}
        \caption{SRG of an $n\times n$ normal matrix with one distinct real eigenvalue and three distinct complex conjugate eigenvalue pairs.}
	\label{fig:normal-SRG}
    \end{subfigure}
    \hspace{0.13in}
    	    \begin{subfigure}[b]{0.54\textwidth}
		\begin{tikzpicture}[scale=1]
				\clip (-3.565,-3.2) rectangle (3.35,3.2);
				\draw (-3.2,0.15) node[] {$\lambda_1$};
				\fill[medgrey] (0,0) circle (3);
				
				\fill[white] (-2.5,0) circle (0.5);
				\draw (-2.2,0.15) node[] {$\lambda_2$};

				\fill[white] (-1.3,0) circle (0.7);
				\draw (-0.8,0.15) node[] {$\lambda_3$};

				\fill[white] (0,0) circle (0.6);
				\draw (0.4,0.15) node[] {$\lambda_4$};
				
				\fill[white] (0.9,0) circle (0.3);
				\draw (0.98,0.15) node[] {$\lambda_5$};
				
				\fill[white] (2.1,0) circle (0.9);
				\draw (2.8,0.15) node[] {$\lambda_6$};
				
				\filldraw (-3,0) circle ({1.0/1pt});
				\filldraw (-2,0) circle ({1.0/1pt});
				\filldraw (-0.6,0) circle ({1.0/1pt});
				\filldraw (0.6,0) circle ({1.0/1pt});
				\filldraw (1.2,0) circle ({1.0/1pt});
				\filldraw (3,0) circle ({1.0/1pt});

				\draw [<->] (-3.6,0) -- (3.3,0);
				\end{tikzpicture}
	        \caption{SRG of an $n\times n$ symmetric matrix with distinct eigenvalues $\lambda_1<\lambda_2<\dots<\lambda_6$. \phantom{sadfiojasd} \phantom{oifjasdoifj} \phantom{aoisdfj} \phantom{oiasdj} \phantom{sadfiojasd} \phantom{oifjasdoifj} \phantom{aoisdfj} \phantom{oiasdj}}
	\label{fig:symm-srg}
    \end{subfigure}%
	\end{center}
	\caption{
	Illustration of Theorem~\ref{mat-main-thm2}.
	For normal matrices, multiplicity of eigenvalues do not affect the SRG.}
\end{figure}
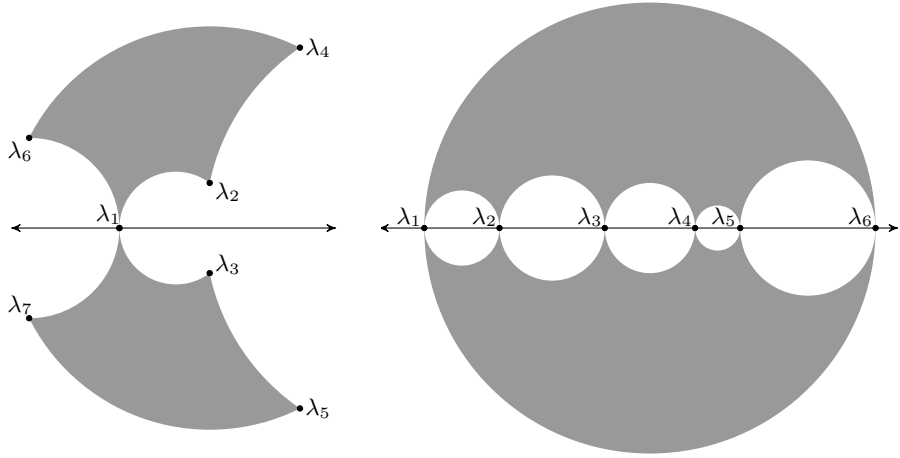

\begin{proposition}
\label{mat_orth_inv}
A matrix's SRG is invariant under orthogonal similarity transforms.
\end{proposition}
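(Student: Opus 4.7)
The plan is to show that for any orthogonal $Q \in \reals^{n\times n}$ and any $A\in \reals^{n\times n}$, one has $z_{Q^TAQ}(x) = z_A(Qx)$ for every nonzero $x\in\reals^n$. Since the map $x \mapsto Qx$ is a bijection on $\reals^n\setminus\{0\}$, taking unions over $x$ on both sides (together with complex conjugates) then immediately gives $\cG(Q^T A Q) = \cG(A)$.

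The key step is a direct calculation using $Q^TQ = QQ^T = I$. First, I would observe that norms are preserved: $\|Q^TAQx\| = \|AQx\|$ because $Q^T$ is an isometry, and $\|x\| = \|Qx\|$ because $Q$ is an isometry. Hence
\[
\frac{\|(Q^T A Q)x\|}{\|x\|} = \frac{\|A(Qx)\|}{\|Qx\|}.
\]
Next, for the angle factor, I would compute the inner product as
\[
\bigl\langle (Q^T A Q)x,\, x\bigr\rangle = \bigl\langle A(Qx),\, Qx\bigr\rangle,
\]
by pushing $Q$ across the inner product. Together with the norm identity above, this yields $\angle\bigl((Q^TAQ)x, x\bigr) = \angle\bigl(A(Qx), Qx\bigr)$, and therefore $z_{Q^TAQ}(x) = z_A(Qx)$, which is exactly the identity we wanted.

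There is essentially no obstacle here; the result is a one-line consequence of the fact that orthogonal changes of basis preserve inner products and norms, and the SRG depends on $x$ only through those inner products and norms. The only mild care is to notice that the definition of $\cG$ symmetrizes via complex conjugation, which is automatic once $\cG^+(Q^TAQ) = \cG^+(A)$ is established as above.
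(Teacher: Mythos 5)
Your proof is correct and takes essentially the same approach as the paper: both establish the pointwise identity $z_{Q^T A Q}(x) = z_A(Qx)$ (the paper writes it as $z_{QAQ^T}(x)=z_A(Q^Tx)$) by using that $Q$ preserves norms and inner products, and then conclude by bijectivity of $x\mapsto Qx$. The only cosmetic difference is that you spell out the inner-product/norm preservation a bit more explicitly before concluding the angle identity.
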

\begin{proof}
Let $A\in \reals^{n\times n}$. Let $Q\in \reals^{n\times n}$ be orthogonal. 
For any nonzero $x\in \reals^n$, we have
\begin{align*}
z_{QAQ^T}(x)
&=\frac{\|QAQ^Tx\|}{\| x\|}\exp[i\angle (QAQ^Tx,x)]\\
&=\frac{\|AQ^Tx\|}{\| x\|}\exp[i\angle (AQ^Tx,Q^Tx)]
=
z_A(Q^Tx).
\end{align*}
Therefore, 
\begin{align*}
\cG(QAQ^T)
&=
\left\{z_{QAQ^T}(x),\overline{z_{QAQ^T}}(x):
x\in \reals^n,\,x\ne 0
\right\}\\&
=
\left\{z_{A}(Qx),\overline{z_{A}}(Qx):
x\in \reals^n,\,x\ne 0
\right\}\\&
=
\left\{z_{A}(x),\overline{z_{A}}(x):
x\in \reals^n,\,x\ne 0
\right\}=\cG(A).
\end{align*}
\end{proof}


\begin{theorem}
\label{mat-main-thm2}
If $A$ is normal, 
then $\mathcal{G}^+(A)=\D(\Lambda(A)\cap\complex^+)$.
\end{theorem}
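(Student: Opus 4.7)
My plan is to combine the real spectral theorem for normal matrices with Proposition~\ref{mat_orth_inv}, Theorem~\ref{mat-main-thm}, and Proposition~\ref{mat-2*2}. First, I would invoke the real spectral theorem: any real normal $A$ is orthogonally similar to a block-diagonal matrix $B=\Diag(B_1,\dots,B_k)$, where each $B_i$ is either a $1\times 1$ real block $[\lambda_i]$ or a $2\times 2$ block of the form $\begin{bmatrix} a_i & -b_i \\ b_i & a_i \end{bmatrix}$ with $b_i>0$, whose eigenvalues form the complex conjugate pair $a_i\pm b_i i$. By Proposition~\ref{mat_orth_inv}, $\cG^+(A)=\cG^+(B)$, reducing the problem to computing $\cG^+(B)$.

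Next, I would compute $\cG^+(B_i)$ for each block. For a $1\times 1$ real block $[\lambda_i]$, clearly $\cG^+(B_i)=\{\lambda_i\}$. For a $2\times 2$ normal block with complex eigenvalues, Proposition~\ref{mat-2*2} applied with $a_1=a_2=a_i$, $b_1=-b_i$, $b_2=b_i$ gives two circles of radius $\sqrt{0+0}=0$ centered at $(a_i,\pm b_i)$, so $\cG(B_i)=\{a_i+b_i i,\,a_i-b_i i\}$ and $\cG^+(B_i)=\{a_i+b_i i\}$. In every case, $\cG^+(B_i)$ is a singleton $\{\lambda_i^+\}$, where $\lambda_i^+$ is the unique eigenvalue of $B_i$ lying in $\complex^+$.

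Finally, I would apply Theorem~\ref{mat-main-thm} to $B$. Since each $\cG^+(B_i)$ is a singleton, the union over choices $z_i\in\cG^+(B_i)$ in that theorem's formula collapses to a single arc-edge polygon:
\[
\cG^+(A)=\cG^+(B)=\D(\lambda_1^+,\dots,\lambda_k^+).
\]
The underlying point set of the list $(\lambda_1^+,\dots,\lambda_k^+)$ is exactly $\Lambda(A)\cap\complex^+$, and since $\D$ is insensitive to repetition or ordering of its vertex arguments (as it is defined via the union $\bigcup_{i,j}\rac(z_i,z_j)$ together with its enclosed region), we conclude $\cG^+(A)=\D(\Lambda(A)\cap\complex^+)$.

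The statement is essentially a direct corollary of Theorem~\ref{mat-main-thm} and the real spectral theorem, so I do not anticipate a serious obstacle. The only step requiring minor care is verifying via Proposition~\ref{mat-2*2} that each $2\times 2$ real normal block with complex eigenvalues contributes a singleton SRG in $\complex^+$, along with the observation that eigenvalue multiplicity plays no role in $\D$; these together explain why, as emphasized in the figure caption, the SRG of a normal matrix depends only on the \emph{set} $\Lambda(A)\cap\complex^+$.
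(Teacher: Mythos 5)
Your proposal is correct and follows essentially the same route as the paper: reduce via the real spectral theorem and Proposition~\ref{mat_orth_inv} to a block-diagonal matrix, use Proposition~\ref{mat-2*2} to see that each $2\times 2$ rotation-type block has singleton $\cG^+$, and then apply Theorem~\ref{mat-main-thm}, noting that $\D$ depends only on the set of vertices. Your write-up is a somewhat more detailed account of the same argument (the paper's proof is terse and even contains a typo, citing Theorem~\ref{mat-main-thm2} where it means Theorem~\ref{mat-main-thm}).
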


\begin{proof}
A normal matrix is orthogonally similar to the real block-diagonal matrix
\[
\begin{bmatrix}
a_1&b_1&&&&&&\\
-b_1&a_1&&&&&&\\
&&\ddots&&&&&\\
&&&a_k&b_k&&&\\
&&&-b_k&a_k&&&\\
&&&&&\lambda_{k+1}&&\\
&&&&&&\ddots&\\
&&&&&&&\lambda_{m}\\
\end{bmatrix}.
\]
Propositions~\ref{mat-2*2} tells us 
\[
\cG^+\left(\begin{bmatrix}
a_j&b_j\\
-b_j&a_j\\
\end{bmatrix}\right)=
\{a_j+|b_j|i\}=\Lambda\left(\begin{bmatrix}
a_j&b_j\\
-b_j&a_j\\
\end{bmatrix}\right)\cap\complex^+
\]
for $j=1,\dots,k$.
We conclude the stated result with Theorem~\ref{mat-main-thm2} and Proposition~\ref{mat_orth_inv}.
\end{proof}

\begin{corollary}
\label{cor:symm-srg}
 Let $A\in \reals^{n\times n}$ be symmetric, and let $\lambda_1<\lambda_2<\dots<\lambda_m$ be the distinct (real) eigenvalues of $A$.
If $m=1$, then $\mathcal{G}^+(A)=\{\lambda_1\}$.
If $m\ge 2$, then
\[
\mathcal{G}(A)=\disk(\lambda_1,\lambda_m)\backslash\bigcup\limits_{i=1}^{ m-1}\disk^\circ(\lambda_i,\lambda_{i+1}).
\]
\end{corollary}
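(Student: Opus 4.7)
The plan is to obtain the corollary as a direct consequence of Theorem~\ref{mat-main-thm2} by identifying the shape of $\D(\lambda_1,\dots,\lambda_m)$ when all $\lambda_i$ are real. Since any symmetric $A\in\reals^{n\times n}$ is normal with $\Lambda(A)\subseteq\reals$, Theorem~\ref{mat-main-thm2} immediately gives $\mathcal{G}^+(A)=\D(\lambda_1,\dots,\lambda_m)$ (the intersection with $\complex^+$ is trivial since the spectrum lies on the real axis). The $m=1$ case then reduces to $\D(\lambda_1)=\{\lambda_1\}$ by definition, so only the case $m\ge 2$ requires work.

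For $m\ge 2$, I would first observe that since $\Im\lambda_i=\Im\lambda_j=0$ for any $i\ne j$, the definition of $\rac$ forces $\rac(\lambda_i,\lambda_j)$ to be the upper semicircle with diameter $[\lambda_i,\lambda_j]$. The outer semicircle $\rac(\lambda_1,\lambda_m)$ contains every other semicircle $\rac(\lambda_i,\lambda_j)\subseteq\mathrm{Disk}(\lambda_1,\lambda_m)\cap\complex^+$, and in particular sits above all of them. I next want to show that the boundary of $\D^+(\lambda_1,\dots,\lambda_m)$ is precisely the Jordan curve formed by the outer semicircle $\rac(\lambda_1,\lambda_m)$ together with the consecutive semicircles $\rac(\lambda_i,\lambda_{i+1})$ for $i=1,\dots,m-1$.

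To nail down the boundary rigorously I would invoke the Beltrami--Klein equivalence from Lemma~\ref{lem:arc-convex2}. Under the map $f\circ g$ in \eqref{eq:fg-klein}, the real axis maps bijectively to the unit circle minus the point $1$, and a short calculation shows that $g(\lambda)=(\lambda-i)/(\lambda+i)$ sends an increasing sequence $\lambda_1<\dots<\lambda_m$ to points on the unit circle traversed monotonically. Thus the images $w_i=f(g(\lambda_i))$ lie on the unit circle in the same cyclic order. The Euclidean convex hull of $\{w_1,\dots,w_m\}$ is then the inscribed polygon whose boundary consists exactly of the chords $[w_i,w_{i+1}]$ and $[w_m,w_1]$, and Lemma~\ref{lem:arc-convex2} tells us that the preimage under $f\circ g$ realizes the boundary of $\D(\lambda_1,\dots,\lambda_m)$ as the corresponding arcs $\rac(\lambda_i,\lambda_{i+1})$ and $\rac(\lambda_1,\lambda_m)$. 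This identifies $\D(\lambda_1,\dots,\lambda_m)$ as the region in $\complex^+$ bounded above by the large semicircle and below (toward the real axis) by the small consecutive semicircles, i.e.\ $\mathcal{G}^+(A)=\bigl(\mathrm{Disk}(\lambda_1,\lambda_m)\setminus\bigcup_{i=1}^{m-1}\mathrm{Disk}^\circ(\lambda_i,\lambda_{i+1})\bigr)\cap\complex^+$.

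Finally, because $\mathcal{G}(A)=\mathcal{G}^+(A)\cup\overline{\mathcal{G}^+(A)}$ and the sets $\mathrm{Disk}(\lambda_1,\lambda_m)$ and $\mathrm{Disk}^\circ(\lambda_i,\lambda_{i+1})$ are symmetric with respect to the real axis, closing up under complex conjugation yields the full stated formula. The only potentially subtle step is the boundary-identification in the middle paragraph: I must verify that no ``interior'' arc $\rac(\lambda_i,\lambda_j)$ with non-consecutive $i,j$ contributes to the boundary and that the small consecutive semicircles really carve out open disks rather than merely arcs. Both facts follow from the Beltrami--Klein transcription (non-consecutive chords lie in the interior of the inscribed polygon) and from the fact that the region enclosed by a Jordan curve in the Poincar\'e half-plane corresponds under $f\circ g$ to the enclosed Euclidean region in the Klein disk.
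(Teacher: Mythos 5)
The paper states this corollary without proof, treating it as an immediate consequence of Theorem~\ref{mat-main-thm2}; your proposal correctly fills in exactly that specialization. The route you take --- restricting to real spectrum, using the Beltrami--Klein map to identify the boundary of $\D(\lambda_1,\dots,\lambda_m)$ as the outer semicircle $\rac(\lambda_1,\lambda_m)$ together with the consecutive semicircles $\rac(\lambda_i,\lambda_{i+1})$, and then closing under conjugation --- is sound and is the natural way to verify the claimed formula.
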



%
%
%

\bibliographystyle{siamplain}
\bibliography{my_srg}

\end{document}